  \providecommand{\loglike}[1]{\mathop {\operator@font #1}\nolimits}
    \newtheorem{thm}{Theorem}[section]
    \newtheorem{cor}[thm]{Corollary}
    \theoremstyle{definition}
    \newtheorem{defn}[thm]{Definition}
    \theoremstyle{remark}
    \newtheorem{rem}[thm]{Remark}
\providecommand{\length}{\loglike{length}}
\providecommand{\Im}{\loglike{lm}}
\providecommand{\All}{\mathbb{U}}
\title{Isometric action of $SL_2(\mathbb{R})$ on homogeneous spaces}
\author{Anastasia V. Kisil}
\date{2nd October 2008. Revised: 7 Aug  2009}
\address{Trinity College, Cambridge, CB2 1TQ, England}
\email{ak528@cam.ac.uk}
\begin{document}
\begin{abstract}
We investigate the \(SL_2(\mathbb{R})\) invariant geodesic curves  with the associated
  invariant distance function in parabolic
  geometry. 
  Parabolic geometry naturally occurs in the study of
  \(SL_2(\mathbb{R})\) and is placed in between the elliptic and
  the  hyperbolic (also known as the Lobachevsky half-plane and 2-dimensional
  Minkowski half-plane space-time) geometries. Initially we attempt to use
  standard methods of finding geodesics but they lead to degeneracy in
  this setup.  Instead, by studying closely the two related elliptic
  and hyperbolic geometries we discover a unified approach to a more
  exotic and less obvious parabolic case. With aid of common
  invariants we describe the possible  distance functions that turn
  out to have some unexpected, interesting properties.
\end{abstract}
\subjclass[2000]{Primary 32F45; Secondary 30F45, 51K05, 53C22.}

\keywords{invariant distance, metric, geodesic, triangle inequality, elliptic, parabolic, hyperbolic}
\maketitle

\section{Introduction}
\label{sec:introduction}
In this paper we will explore the isometric action of the semi-simple Lie group
 \(G=SL_2(\mathbb{R})\) (\(2 \times 2\) real matrices with determinant
 one) on the homogeneous spaces
\(G/H\) where \(H\) is one dimensional subgroup of \(G\). There are only
three such subgroups up to conjugacy, proved in~\cite{Lang85}*{\S~III.1}:
\begin{eqnarray}
\label{eq:rotation}
K&=& \left\{
  \begin{pmatrix}
    \cos\theta & -\sin\theta \\
    \sin\theta &  \cos\theta\\
  \end{pmatrix},
  \qquad
  \text{where}
  \quad
  0\le \theta<2 \pi \right\} \nonumber\\
N'&=& \left\{
  \begin{pmatrix}
    1 & 0 \\
    t & 1 \\
  \end{pmatrix},
  \qquad
  \text{where}
  \quad
  t \in \mathbb{R} \right\} \\
A'&=& \left\{
  \begin{pmatrix}
    \cosh\alpha & \sinh\alpha \\
    \sinh\alpha &  \cosh\alpha\\
  \end{pmatrix}, 
  \qquad
  \text{where}
  \quad \alpha \in \mathbb{R}\right\} \nonumber.
\end{eqnarray}
We can represent the action of \(SL_2(\mathbb{R})\) on the homogeneous
spaces \(G/H\) by M\"obius transformations:
  \begin{equation}
\label{eq:action_g}
    g \cdot w=\frac{aw+b}{cw+d} \quad \text{where}\quad w \in
  \mathbb{\All ,}\quad
   g=
  \begin{pmatrix}
    a & b\\
    c& d
  \end{pmatrix}
\in SL_2(\mathbb{R}).
\end{equation}
This is a group homomorphism from \(SL_2(\mathbb{R})\) to M\"obius
transformations of the upper half-plane due to the fact that
composition of two such M\"obius maps is represented by the product of
two respective matrices.  Here \( \All\) are numbers of the form
\(w=x+iy\) with \(i^2= \sigma=-1, 0, 1\) and are called \emph{complex}, \emph{dual}
and \emph{double} numbers respectively see
\citelist{\cite{BocCatoniCannataNichZamp07}
  \cite{HerranzOrtegaSantander99a}}.

Those three subgroups give rise to \emph{elliptic}, \emph{parabolic}
and \emph{hyperbolic} geometries (abbreviated \emph{ EPH}). The name
comes about from the shape of the equidistant orbits which are
ellipses, parabolas, hyperbolas respectively. Thus the Lobachevsky
geometry is elliptic (not hyperbolic) in our terminology.

EPH are 2-dimensional Riemannian, non-Riemannian and pseudo-Riemannian
geometries on the upper half-plane (or on the unit disc).
Non-Riemannian geometries, see~\citelist{\cite{BocCatoniCannataNichZamp07}
  \cite{MikesBascoBerezovski08a} \cite{KuruczWolterZakharyaschev05}
  \cite{BekkaraFrancesZeghib06}}, are a growing field with geometries
like Finsler in~\cite{Chern96a} gaining more influence. The Minkowski
geometry is formalised in a sector of a flat plane by means of double
numbers~\cite{BocCatoniCannataNichZamp07}, see also~\cite{Kisil04b}. 

Subgroups \(H\) from~\eqref{eq:rotation} fix the imaginary unit \(i\)
under the action of~\eqref{eq:action_g} and thus are known as
\emph{EPH rotations} (around \(i\)).  Consider a distance function invariant under
the \(SL_2(\mathbb{R})\) action. Then the orbits of \(H\) will be
equidistant points from \(i\), giving some indication on what the
distance function should be. But this does not determine the distance
entirely since there is freedom in assigning values to the orbits.
Review a well-known standard definition of distance \(d: X \times X
\to \mathbb{R^+}\) with:
\begin{enumerate}
\item \(d(x, y) \ge 0\),
\item \(d(x, y)=0\)  iff  \(x=y\),
\item \(d(x, y)=d(y, x)\),
\item \(d(x,y)\le d(x,z)+d(z,y)\),
\end{enumerate}
for all \(x\), \(y\), \(z\) \(\in X\). Although adequate in many cases,
the defined concept does not cover all interesting distance functions.
Examples include the recent study of distances with
omission of symmetry or  the triangle 
inequality as in ~\cite{KuruczWolterZakharyaschev05}. In the hugely important
Minkowski space-time the reverse of the triangle inequality holds.
We will be referring to this later.

Our consideration will be based on equidistant orbits, which
physically correspond to wavefronts with a constant velocity.  For
example, if you drop a stone in the pond the ripples you see will be
waves, which travelled the same distance from a dropping point.  A
dual description to wavefronts uses rays---the paths along which waves
travels, i.e. the geodesics in the case of a constant velocity.  The
duality between wavefronts and rays is provided by Huygens' principle
in~\cite{Arnold91}*{\S~46}.

Geodesics also play a central role in differential geometry
generalising the notion of a straight lines.  They are closely related
to a distance function: geodesics are often defined as curves which
extremize distance. As a consequence, along geodesics the distance
function is additive.



In the next section we will describe the invariant metrics in EPH
and the Riemannian approach to geodesics. In
Section~\ref{sec:algebraic-invariants} we describe all invariant
distance functions satisfying a mild assumption. In
Section~\ref{sec:additivity} we deduce geodesics from invariant
distances using the additivity property. An alternative construction
of invariant distances from invariant geodesics is described in
Section~\ref{sec:geometric-invariants}. The triangle inequality for
those distance functions is studied in
Section~\ref{sec:prop-dist-funct}. 

\section{Metric, Length and Extrema}
\label{sec:metr-length-extr}

Recall the established procedure of constructing geodesics in
Riemannian geometry (two-dimensional case) as in~\cite{Wilson08a}*{\S~7}:

\begin{enumerate}
\item Define the metric of the space: \(Edu^2+Fdu dv +Gdv^2\).
\item Define length for a curve \(\Gamma\) as:
\begin{equation}
\length(\Gamma)=\int_\Gamma (Edu^2+Fdu dv +Gdv^2)^{\frac{1}{2}}.
\end{equation}
\item Then geodesics will be defined as the curves which give a
  stationary point for length. 
\item Lastly the distance between two points is the length of a geodesic
  joining those two points.
\end{enumerate}

In this respect, to obtain the \(SL_2(\mathbb{R})\) invariant
(refereed simply as invariant in the rest of the
paper) distance we require the invariant metric. 

\begin{thm}
 The invariant metric in EPH cases is
 \begin{equation}
   d\label{eq:metric}
   s^2=\frac{du^2-\sigma dv^2}{v^2},
 \end{equation}
where \(\sigma=-1,0,1\) respectively. 
\end{thm}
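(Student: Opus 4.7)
The plan is to verify the formula by a unified computation treating all three cases simultaneously through the algebra $i^2 = \sigma$. First I would introduce the $\sigma$-conjugate $\bar{w} = u - iv$ and the $\sigma$-modulus $|w|_\sigma^2 = w\bar{w} = u^2 - \sigma v^2$, which immediately gives $|dw|_\sigma^2 = du^2 - \sigma dv^2$. The candidate metric then reads compactly as
\[
ds^2 = \frac{|dw|_\sigma^2}{v^2},
\]
and the task reduces to showing that $|dw|_\sigma^2$ and $v^2$ acquire the same conformal factor under any M\"obius map $w' = (aw+b)/(cw+d)$.

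For the numerator I would use the derivative identity $f'(w) = 1/(cw+d)^2$, which transfers formally to the dual and double number cases wherever $cw+d$ is invertible; this yields $dw' = dw/(cw+d)^2$ and hence $|dw'|_\sigma^2 = |dw|_\sigma^2/|cw+d|_\sigma^4$. For the denominator, multiplying $w'$ through by $(c\bar{w}+d)/(c\bar{w}+d)$ and using $ad-bc=1$ to extract the imaginary part gives $v' = v/|cw+d|_\sigma^2$. Squaring and dividing, the factors of $|cw+d|_\sigma^4$ cancel and invariance of $ds^2$ follows for every $g \in SL_2(\mathbb{R})$.

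Uniqueness up to a positive constant would then follow by invoking transitivity of $SL_2(\mathbb{R})$ on the orbit of $i$: an invariant metric is determined by its value on the tangent space at $i$, which must be invariant under the linearised isotropy action. A short computation shows that $K$ acts at $i$ as a rotation (selecting $du^2 + dv^2$), $N'$ as the shear $(du,dv) \mapsto (du,\, dv - 2t\,du)$ (selecting $du^2$ alone), and $A'$ as a hyperbolic boost (selecting $du^2 - dv^2$), in each case recovering $du^2 - \sigma dv^2$ evaluated at $v=1$ and pinning down the overall constant.

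The main obstacle is the parabolic case $\sigma = 0$. The shear action of $N'$ at $i$ annihilates both the $dv^2$ and the $du\,dv$ coefficients, so the invariant metric $ds^2 = du^2/v^2$ is forced to be degenerate as a quadratic form on tangent vectors. This is entirely consistent with the theorem, but it signals that the Riemannian length-and-extremum procedure of this section will collapse in the parabolic geometry --- vertical paths will have length zero and no minimiser will connect two points on the same vertical line --- precisely the difficulty motivating the alternative distance constructions developed in the later sections.
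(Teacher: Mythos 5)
Your proposal is correct, and its second half is essentially the paper's own argument: the paper also computes the linearised action of the isotropy subgroups at \(i\) (Jacobians acting as rotation by \(2\theta\), the shear \(\begin{pmatrix}1&0\\2t&1\end{pmatrix}\), and a boost by \(2\alpha\)), reads off \(du^2-\sigma dv^2\) as the invariant form at \(i\), and then transports it to a general point \(w=u+iv\) by the affine map \(r^{-1}\colon w\mapsto (w-u)/v\), which produces the factor \(1/v^2\). Your sign discrepancy in the shear (\(dv-2t\,du\) versus \(dv+2t\,du\)) is immaterial. What you add beyond the paper is the first half: a direct sufficiency check that \(ds^2=\lvert dw\rvert_\sigma^2/v^2\) is invariant under the whole of \(SL_2(\mathbb{R})\), via the identities \(dw'=dw/(cw+d)^2\) and \(\Im[w']=\Im[w]/\lvert cw+d\rvert_\sigma^2\) in the common algebra \(i^2=\sigma\) (valid wherever \(cw+d\) is not a zero divisor, which you correctly flag). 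The paper leaves this verification implicit, its proof being strictly the necessity direction (isotropy invariance at \(i\) plus transitivity), so your version is slightly more complete; conversely, the paper's route is shorter because it never needs the multiplicativity of \(\lvert\cdot\rvert_\sigma\) or the transformation law of \(\Im\) in the dual and double number rings. Your closing observation that the shear kills both the \(du\,dv\) and \(dv^2\) coefficients, forcing degeneracy of the parabolic metric, matches exactly the degeneracy the paper encounters at the end of Section 2 and uses to motivate the later constructions. One small caution: the theorem as stated fixes the normalisation, whereas the isotropy argument only determines the form up to a positive multiple, so "pinning down the overall constant" should be read as a choice of normalisation rather than a consequence of invariance.
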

In the proof below we will follow the same procedure as in~\cite{Carne06}*{\S~10}.
\begin{proof}
  In order to calculate the metric consider the
  subgroups~\eqref{eq:rotation} of M\"obius transformations that fix
  \(i\).  Denote an element of those rotations by \(E_\sigma\).  We
  require an isometry so:
  \begin{equation}
d(i,i+\delta v)=d(i,E_\sigma(i+\delta v)).
  \end{equation}
Using the Taylor series we get:
\begin{equation}
E_\sigma(i+\delta v)=i+J_\sigma(i)\delta v+o(\delta v),
\end{equation}
where the Jacobian denoted \(J_\sigma\) respectively is: 
\begin{equation}
  \begin{pmatrix}
\cos2\theta & -\sin2\theta \\
\sin2\theta &  \cos2\theta\\
  \end{pmatrix}
,
  \qquad
  \begin{pmatrix}
1 & 0 \\
2t & 1 \\
  \end{pmatrix}
\quad
\text{or}
  \quad
  \begin{pmatrix}
\cosh2\alpha & \sinh2\alpha \\
\sinh2\alpha &  \cosh2\alpha\\
  \end{pmatrix}.
\end{equation}
A metric is invariant under the above rotations if it is preserved under the
linear transformation:
\begin{equation}
  \begin{pmatrix}
    dU \\
    dV \\
  \end{pmatrix}
  =J_\sigma
  \begin{pmatrix}
    du\\
    dv\\
  \end{pmatrix},
\end{equation}
which turns out to be, \(du^2-\sigma dv^2\) in the three cases.

To calculate the metric at an arbitrary point \(w=u+iv\) we map \(w\) to
\(i\) by an affine M\"obius  transformation, which acts transitively
on the upper half-plane
\begin{equation}\label{eq:s-map}
  r^{-1}: \quad w \to \frac{w-u}{v}
\end{equation}
hence there is a factor of \((\frac{1}{v})^2\) multiplying the metric
giving \(ds^2=\frac{du^2-\sigma dv^2}{v^2}\).
\end{proof}
\begin{cor}With the notation from above, for an arbitrary curve \(\Gamma\):
\begin{equation}
\length(\Gamma)=\int_\Gamma \frac{(du^2-\sigma
  dv^2)^{\frac{1}{2}}}{v}.
\end{equation}
\end{cor}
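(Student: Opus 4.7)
The plan is to observe that this corollary is a direct consequence of the preceding theorem combined with step (2) of the Riemannian procedure for length recalled at the start of Section~\ref{sec:metr-length-extr}. Specifically, once we have identified the invariant line element $ds^2 = \frac{du^2 - \sigma dv^2}{v^2}$, the length of a curve is by definition the integral of $ds$ along the curve.

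Concretely, I would substitute the expression for $ds^2$ from~\eqref{eq:metric} into
\begin{equation*}
\length(\Gamma) = \int_\Gamma ds = \int_\Gamma (ds^2)^{1/2},
\end{equation*}
and then pull the factor $1/v^2$ out of the square root as $1/v$ (using that $v>0$ on the upper half-plane, so no absolute value is needed). This yields
\begin{equation*}
\length(\Gamma) = \int_\Gamma \frac{(du^2 - \sigma\, dv^2)^{1/2}}{v},
\end{equation*}
which is exactly the claimed formula.

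There is essentially no obstacle to this derivation; it is a one-line substitution. The only point worth flagging is the parabolic case $\sigma = 0$, where the integrand collapses to $|du|/v$ and is insensitive to $dv$. This is the degeneracy alluded to in the abstract, and it signals that the naive Riemannian extremisation strategy will not produce a satisfactory notion of geodesic in the parabolic setting, motivating the alternative invariant-theoretic approach developed in the later sections.
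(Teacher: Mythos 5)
Your proposal is correct and matches the paper's (implicit) argument: the corollary follows immediately by substituting the invariant metric \(ds^2=\frac{du^2-\sigma dv^2}{v^2}\) into the length formula from step (2) of the Riemannian procedure and extracting the factor \(1/v\) (valid since \(v>0\) on the upper half-plane). Your side remark about the degenerate parabolic case is consistent with the paper's subsequent discussion and does not affect the derivation.
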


In the two non-degenerate cases (elliptic and hyperbolic) to find the
geodesics is straightforward, it is now the case of solving the
Euler-Lagrange equations and hence finding the minimum or the maximum
respectively. The Euler-Lagrange equations for the metric
\eqref{eq:metric} take the form:
\begin{equation}
\label{eq:EL}
  \frac{d}{d t} \left( \frac{\dot{\gamma_1}}{y^2}\right)=0, \qquad
  \frac{d}{d t}\left( \frac{\sigma
    \dot{\gamma_2}}{y^2}\right)=\frac{\dot{\gamma_1}^2-\sigma
    \dot{\gamma_2}^2}{y^3}.
\end{equation}
where \(\gamma\) is a smooth curve \(\gamma(T)=(\gamma_1(T),\gamma_2(T))\) and \( T
\in (a,b)\).

For \(\sigma=-1\) the solution is well-known: semicircles orthogonal
to the real axes or vertical lines, as in~\cite{Beardon05a}*{\S~15}.
Equations of the ones passing though \(i\) are, see~\cite{Wilson08a}*{\S~7}:
\begin{equation}
(x^2 +y^2)\sin 2t -2x\cos 2t -\sin2 t =0
\end{equation}
where \(t \in \mathbb{R}\). And the distance function is then:
  \begin{equation}
\label{eq:elliptic}
    d(z,w)=\sinh^{-1}\frac{\left | z-w \right |}{2\sqrt{\Im[z]\Im[w]}}
  \end{equation}
where \(\Im[z]\) is the imaginary part of \(z\).

In the hyperbolic case when \(\sigma=1\) there are two families of
solutions,  one space-like, one time-like:
\begin{equation}
x^2-y^2 -2tx +1=0 \quad \text{and} 
\end{equation}
\begin{equation}
 (x^2 -y^2)\sinh 2t-2x\cosh2t +\sinh 2t=0
\end{equation}
with \(t \in \mathbb{R}\). Those are again orthogonal
see  \citelist{\cite{Kisil06a} \cite{BocCatoniCannataNichZamp07}} to the real axes. And the distance
functions are:
\begin{equation}
  \label{eq:hyperbolic}
  d(z,w)= \left\{ \begin{array}{ll}
     2 \sin^{-1}\frac{\sqrt{\left | \Re{[z-w]}^2-\Im{[z-w]}^2 \right | }}{2\sqrt{\Im[z]\Im[w]}} ,&   \textrm{when space-like;} \\[12pt] 
     2 \sinh^{-1}\frac{\sqrt{\left |\Re{[z-w]}^2-\Im{[z-w]}^2\right |}}{2\sqrt{\Im[z]\Im[w]}} ,&  \textrm{when time-like,}
    \end{array} \right.
\end{equation}
where \(\Re[z]\) and \(\Im[z-w]\) are the real and imaginary part of \(z\).

But in the parabolic framework the only solution of \eqref{eq:EL} are
vertical lines, as in~\cite{Yaglom79}*{\S~3}. Note that they are again
orthogonal to the real axes. They indeed minimise the distance between
two points \(w_1\), \(w_2\) since the geodesic is up the line
\(x=\Re[w_1]\) through infinity and down \(x=\Re[w_2]\). Any points on
the same vertical lines have distance zero, so \(d(w_1, w_2)=0\) for
all \( w_1\), \(w_2\) which is a very degenerate function. Hence we go
on to study further the algebraic and geometric invariants to find a
more adequate answer.

\begin{rem}
The same geodesic equations can be obtained by Beltrami's method
\cite{BocCatoniCannataNichZamp07}
\end{rem}
\section{Algebraic Invariants}
\label{sec:algebraic-invariants}

We seek all real valued functions invariant under the group action of
\(SL_2(\mathbb{R})\)~\eqref{eq:action_g}:
\begin{displaymath}
  f(g(z),g(w))=f(z,w) \quad \textrm{for all} \quad z,w\in \All \quad
  \textrm{and} \quad g\in SL_2(\mathbb{R})
\end{displaymath}
where \(\All\) is the complex, dual or double numbers.  One such
function is, similarly to  \eqref{eq:elliptic}~\eqref{eq:hyperbolic}:
\begin{equation}
  F(z,w)=\frac{\left | z-w \right |_\sigma}{\sqrt{\Im[z]\Im[w]}},
\end{equation}
which can be shown by a simple direct calculation. Note that
\begin{equation}
 \left | z-w \right |_\sigma^2=\Re{[z-w]}^2-\sigma \Im{[z-w]}^2
\end{equation}
in analogy with the metric in EPH geometries, similarly to what is
done in~\cite{Yaglom79}*{App.~C}. We will
need the following definition:
\begin{defn}
\label{th:monotonious}
A function \(f : X \times X \to \mathbb{R^+}\) is called a
\emph{monotonous} distance if \(f(\Gamma(0), \Gamma(t))\) is a continuous
monotonically increasing function of \(t\) where
\(\Gamma~:~[0,1)~\to~X\) is a smooth curve with \(\Gamma(0)=z_0\) that
intersects all equidistant orbits of \( z_0\) exactly once.
\end{defn}
For example function \(F(z,w)\) is monotonous.

\begin{thm} \label{th:monotonic} 
 A  monotonous function \(f(z,w)\) is
  invariant under \(g \in SL_2(\mathbb{R}) \) if and only if there exists a monotonically
  increasing continuous real function \(h\) such that \(f(z, w)=h
  \circ F(z,w)\).
\end{thm}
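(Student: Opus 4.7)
The forward (``if'') direction is immediate: if $f = h\circ F$ and $F$ is $SL_2(\mathbb{R})$-invariant (which follows from the identity $|g(z)-g(w)|_\sigma / \sqrt{\Im[g(z)]\Im[g(w)]}=|z-w|_\sigma / \sqrt{\Im[z]\Im[w]}$, verifiable by direct computation on affine maps and on the stabiliser $H$), then $f(g(z),g(w))=h(F(g(z),g(w)))=h(F(z,w))=f(z,w)$.

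For the converse, the plan is to exploit the transitive action of the affine subgroup on the upper half-plane to reduce to the case $z=i$, and then to use the fact that the orbits of the stabiliser $H$ of $i$ are precisely the equidistant curves. Concretely, given any $z$ the affine map $r^{-1}$ from~\eqref{eq:s-map} sends $z$ to $i$, so by invariance $f(z,w)=f(i,r^{-1}(w))$ and likewise $F(z,w)=F(i,r^{-1}(w))$. It therefore suffices to produce a continuous increasing $h$ with $f(i,\cdot)=h\circ F(i,\cdot)$ on the upper half-plane. Since $f$ is invariant under $SL_2(\mathbb{R})$, its restriction $f(i,\cdot)$ is constant on every $H$-orbit through a point $w_0\neq i$; the same is true of $F(i,\cdot)$, because $F$ is itself invariant.

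Now pick the curve $\Gamma:[0,1)\to \All$ with $\Gamma(0)=i$ furnished by the monotonous hypothesis, so that $\Gamma$ crosses each $H$-orbit of $i$ exactly once. Both real-valued functions $t\mapsto f(i,\Gamma(t))$ and $t\mapsto F(i,\Gamma(t))$ are, by Definition~\ref{th:monotonious} applied to $f$ and to the example $F$, continuous and monotonically increasing in $t$; the ``exactly once'' clause forces strict monotonicity, for if $F$ were constant on some subinterval then two points of $\Gamma$ would lie on a common equidistant orbit. Consequently $t\mapsto F(i,\Gamma(t))$ is a homeomorphism from $[0,1)$ onto its image $I\subset\mathbb{R}^+$, and we may define
\begin{equation}
h(s)=f\bigl(i,\Gamma(t(s))\bigr),\qquad s\in I,
\end{equation}
where $t(s)$ is the unique $t$ with $F(i,\Gamma(t))=s$. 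By construction $h$ is continuous and strictly increasing on $I$ (composition of two such maps), and the identity $f(i,\Gamma(t))=h(F(i,\Gamma(t)))$ holds along $\Gamma$.

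It remains to propagate this identity from $\Gamma$ to the whole upper half-plane. For any point $w'$ in the upper half-plane distinct from $i$, there is (by the intersection hypothesis on $\Gamma$) a unique $t^\ast$ with $\Gamma(t^\ast)$ on the same $H$-orbit as $w'$; since both $f(i,\cdot)$ and $F(i,\cdot)$ are constant on that orbit, $f(i,w')=f(i,\Gamma(t^\ast))=h(F(i,\Gamma(t^\ast)))=h(F(i,w'))$. Combining this with the reduction from the second paragraph gives $f(z,w)=h(F(z,w))$ for all $z,w$. The main obstacle in executing this plan is verifying that the ``exactly once'' clause in the definition of a monotonous distance truly forces strict monotonicity along $\Gamma$ (so that $t(s)$ is well defined) and that $\Gamma$ meets every orbit (so that $h$ is defined on the full range of $F$); both points are, however, precisely what Definition~\ref{th:monotonious} is engineered to give us.
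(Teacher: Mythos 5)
Your proof is correct and takes essentially the same route as the paper: reduce to the base point \(i\) via the transitive affine map \(r^{-1}\), parametrise along the curve \(\Gamma\) from Definition~\ref{th:monotonious}, and obtain \(h\) as the composition of \(t\mapsto f(i,\Gamma(t))\) with the inverse of \(t\mapsto F(i,\Gamma(t))\) (the paper's \(h\circ f^{-1}\)). The only difference is that you make explicit the propagation from \(\Gamma\) to the whole half-plane through constancy on the \(H\)-orbits and the strict-monotonicity point needed for invertibility, steps the paper leaves implicit.
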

\begin{proof}
  Given \(f(z, w)=h \circ F(z,w)\) then:
  \begin{displaymath}
    f(g(z),g(w))=h\circ F(g(z),g(w))=h \circ F(z,w)=f(z,w)
  \end{displaymath}
  with  \(g \in SL_2(\mathbb{R})\). Also \(F(z,w)\) is monotonous and
  so \(h \circ F(z,w)\) is.
  
  Conversely, suppose there exists another function with such a property say,
  \(H(z,w)\).  Due to invariance under \(SL_2(\mathbb{R})\) this can
  be viewed as a function in one variable if we apply \(r^{-1}\)
  (cf.~\eqref{eq:s-map}) which sends \(z\) to \(i\) and \(w\) to
  \(r^{-1}(w)\).  Now by considering a fixed smooth curve \(\Gamma\)
  from \ref{th:monotonious} we can completely define \(H(z,w)\) as a
  function of a single real variable \(h(t)=H(i, \Gamma(t))\) and
  similarly for \(F(z,w)\):
  \begin{displaymath}
    H(z,w)=H(i,r^{-1}(w))=h(t) 
    \quad\text{and}\quad
    F(z,w)=F(i,r^{-1}(w))=f(t)
  \end{displaymath}
  where  \(h\) and \(f\) are both continuous and  monotonically
  increasing  since they represent distance.
  Hence the inverse \(f^{-1}\) exists
  everywhere by the inverse function theorem. So:
  \begin{eqnarray}
    H(i,r^{-1}(w))&=&h \circ f^{-1} \circ F(i,r^{-1}(w)).
  \end{eqnarray}
  Note that \(hf^{-1}\) is monotonic as its the composition of two
  monotonically increasing functions and this ends the
  proof.
\end{proof}
\begin{rem}
  The above proof carries over to a more general
  theorem stating: If there exist two monotonous functions \(F(x,y)\)
  and \(H(x,y)\) invariant under a transitive action of the group G
  then  there exists  a monotonically increasing real function
  \(h\)  such that \(H(z,w)=h \circ F(z,w)\).
\end{rem}
As discussed in the previous section, in elliptic and hyperbolic
geometries the function \(h\) from above  is either \(\sinh^{-1}t\) or
\(\sin^{-1}t\) \eqref{eq:elliptic}~\eqref{eq:hyperbolic}. Hence it
is reasonable to try inverse trigonometric and hyperbolic
functions in the intermediate parabolic case too.
\begin{rem}
  The above result sheds light on the freedom we have;  we can either
  (i) ``label'' the equidistant orbits with numbers i.e choose a function \(h\) which
  will then determine the geodesic; (ii) or
  choose a geodesic which will then determine  \(h\). Those two 
  approaches  are reflected in the next two chapters.
\end{rem}

\section{Additivity}
\label{sec:additivity}

As pointed out, earlier there might not be a distance function which
satisfies all the traditional properties. But we still need the key
ones, in light of this we make the following definition: 
\begin{defn}
  \emph{Geodesics} are smooth curves along which the distance function is additive. 
\end{defn}
\begin{rem}
 It is important that this definition is relevant in all EPH cases,
 i.e. in the elliptic and hyperbolic cases it would produce the
 well-known geodesics defined by the extremality condition. 
\end{rem}
Schematically the proposed approach is:
\begin{equation}
  \text{invariant distance} \quad  \xrightarrow{additivity}
  \quad \text{invariant geodesic} 
\end{equation}
compare this with the Riemannian:
\begin{equation}\label{eq:metr-geod-dist}
  \text{local metric} \quad  \xrightarrow{extrema} \quad
  \text{geodesic} \quad  \xrightarrow{integration} \quad
  \text{distance}. 
\end{equation}
Let us now proceed with  finding geodesics from a distance function.

Suppose given a function \(d(w, w')\) with \( w\), \(w' \in \All\) there
exists a smooth curve joining every two points along which \(d\) is
additive. We can view the function \(d(w,w')\) as a real function in
\(4\) variables say \(f(u, v, u', v')\).  Take two points
\(w_1=u_1+iv_1\) and \(w_2=u_2+iv_2\) on such a curve.  Then consider
a nearby point \(w_3=w_2+\delta w=u_2+\delta u +i(v_2+\delta v)\) and write the
additivity condition:
\begin{equation}
\label{eq:add}
  d(w_1,w_2)+d(w_2,w_3)=d(w_1, w_3).
\end{equation}
Use the Taylor series, viewing \(f\) as a function of four real
variables, we obtain:
\begin{equation}
  d(w_2,w_3)=f(u_2, v_2, u_2+\delta u,v_2+\delta v)=f(u_2, v_2,u_2,
  v_2)+J_f(u_2, v_2,u_2,
  v_2) \delta w+o(\delta w)
\end{equation}
where \(J_f\) is the Jacobian.
Since \(f\) is a distance function we demand that distance from a
point to itself is zero hence \(f(u_2, v_2,u_2,
v_2)=0\). Also:
\begin{equation}
  d(w_1,w_3)=f(u_1, v_1, u_2+\delta u,v_2+\delta v)=f(u_1, v_1,u_2,
  v_2)+J_f(u_1, v_1,u_2,
  v_2) \delta w+o(\delta w),
\end{equation}
substituting this into \eqref{eq:add} gives:
\begin{eqnarray*}
  J_f(u_2, v_2,u_2,
  v_2) \delta w &=& J_f(u_1, v_1,u_2,
  v_2) \delta w, \\
  f_3(u_2,v_2,u_2,v_2) \delta u +f_4(u_2, v_2, u_2, v_2) \delta v &=&
  f_3(u_1, v_1, u_2,v_2) \delta u
  +f_4(u_1, v_1, u_2,v_2) \delta v.
\end{eqnarray*}
And finally obtain the differential equation:
\begin{equation}
\label{eq:differential}
  \frac{\delta v}{\delta u} = \frac{-f_3(u_1, v_1, u_2,v_2)
    +f_3(u_2,v_2,u_2,v_2)}{f_4(u_1, v_1, u_2,v_2) -f_4(u_2,v_2,u_2,v_2)}, 
\end{equation}
where \(f_n\) stands for partial derivative with respect to the
\(n\)-th variable.

A natural choice for  \(d(w,w')\) is
\(\sin_{\breve{\sigma}}^{-1}\frac{\left|
  w-w' \right|_\sigma}{2\sqrt{\Im[w]\Im[w']}}\) where:
\begin{equation}
  \sin_{\breve{\sigma}}^{-1}t= \left\{ \begin{array}{ll}
      \sinh^{-1} t, & \textrm{if \(\breve{\sigma}=-1;\)}\\
      2t, & \textrm{if \(\breve{\sigma}=0\)};\\
      \sin^{-1}t,  & \textrm{if \(\breve{\sigma}=1\)}.
    \end{array} \right.
\end{equation}
known as elliptic, parabolic and hyperbolic inverse sine, for
motivation see \citelist{\cite{Kisil07a} \cite{HerranzOrtegaSantander99a}}. Note that
\(\breve{\sigma}\) is entirely different from \( \sigma\) although it
takes the same values. It is used to denote the possible sub-cases
within the parabolic geometry alone. Substituting those functions in
\eqref{eq:differential} gives:
\begin{equation}
  \frac{\delta v}{\delta u}=\frac{2v_2}{u_2-u_1}
  -\frac{\sqrt{\left|\breve{\sigma}{(u_1-u_2)}^2+4v_1v_2 \right|}}{u_2-u_1},
\end{equation}
where \(\breve{\sigma}=-1,0,1\) correspond to elliptic, parabolic
and hyperbolic inverse sine of the distance function.  Now taking
\(u_1=0\) and \(v_1=1\) and varying \(u_2+iv_2\) gives the differential
equation:
\begin{equation}
  \frac{\delta v}{\delta u}=\frac{2v}{u}-\frac{\sqrt{\left|
        \breve{\sigma} {u}^2+4v \right|} }{u}.
\end{equation}
Solutions are families of parabolas
\((\breve{\sigma}+4t^2)u^2-8tu-4v+4=0\). This is a subset of a very
important invariant class of curves called \emph{cycles} defined
in~\cite{Yaglom79}*{\S~7}, and further studied in
\cite{Kisil06a}%
. Cycles could be thought of the natural curves in those geometries,  in the
EPH cases they are circles, parabolas and hyperbolas respectively.
Summarising:
\begin{thm}
  The geodesics through \(i\) defined with the distance function
  \(\sin_{\breve{\sigma}}^{-1}\frac{\left |z-w
    \right |}{2\sqrt{\Im[z]\Im[w]}}\) are parabolas of the form
  \((\breve{\sigma}+4t^2)u^2-8tu-4v+4=0\).
\end{thm}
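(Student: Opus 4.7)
The plan is to obtain the geodesics through $i$ by feeding the candidate distance into the additivity machinery already set up in this section, and then solving the resulting first-order ODE. All the heavy lifting has been done in~\eqref{eq:differential}; what remains is to specialise it to the distance function at hand and integrate.

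First I would compute the partial derivatives $f_3$ and $f_4$ of $f(u,v,u',v') = \sin^{-1}_{\breve{\sigma}}\tfrac{|u-u'|}{2\sqrt{vv'}}$ by the chain rule (recall that the ambient geometry is parabolic, so $\sigma = 0$ and $|w-w'|_0 = |u-u'|$). The outer derivative $(\sin^{-1}_{\breve{\sigma}}){}'(t)$ equals $\tfrac{1}{\sqrt{1-\breve{\sigma}t^2}}$, so at the diagonal $(u_2,v_2,u_2,v_2)$ its value is $1$, while at $(u_1,v_1,u_2,v_2)$ it contributes a factor of $\tfrac{2\sqrt{v_1 v_2}}{\sqrt{4v_1 v_2 - \breve{\sigma}(u_1-u_2)^2}}$. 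Substituting these into~\eqref{eq:differential} and simplifying produces the explicit first-order ODE already displayed in the text.

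Next I would impose the initial condition by setting $(u_1,v_1) = (0,1)$ and renaming $(u_2,v_2) = (u,v)$, reducing matters to an autonomous equation in $u$ and $v$. Rather than integrating from scratch, I would verify directly that every curve in the family $(\breve{\sigma}+4t^2)u^2 - 8tu - 4v + 4 = 0$ is a solution: implicit differentiation gives $v' = \tfrac{1}{2}\bigl[(\breve{\sigma}+4t^2)u - 4t\bigr]$, and the key algebraic identity $4v - \breve{\sigma}u^2 = (2tu-2)^2$ (obtained by eliminating $v$ from the parabola equation) reduces the check of the ODE to a tautology once the appropriate sign branch of the square root is selected. Setting $u=0$ forces $v=1$, so every curve passes through $i$, and since $t = -\tfrac{1}{2}v'(0)$ parametrises the slope of the geodesic at $i$, varying $t \in \mathbb{R}$ exhausts all possible initial slopes and hence all geodesics through $i$.

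The main obstacle is the bookkeeping surrounding the absolute value $|u-u'|$ in the distance function: the partial derivative $f_3$ has a sign discontinuity across the diagonal, and the square root appearing in the ODE must be given consistent sign branches on the two sides of each geodesic's vertex. Verifying that the two branches glue into a single smooth parabola is the only nontrivial compatibility check; once this is resolved, all three sub-cases $\breve{\sigma} \in \{-1, 0, 1\}$ go through uniformly.
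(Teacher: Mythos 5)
Correct, and essentially the paper's own route: the theorem summarises the computation of Section~\ref{sec:additivity}, and like the paper you substitute the candidate distance into \eqref{eq:differential}, specialise to \((u_1,v_1)=(0,1)\), and identify the solutions with the stated family of parabolas, merely replacing the integration by a direct verification via \(4v-\breve{\sigma}u^2=(2tu-2)^2\) together with the slope argument \(t=-\tfrac{1}{2}v'(0)\) for exhaustiveness, plus an honest flag of the sign-branch issue coming from \(\left|u-u'\right|\). One caveat: your (correct) chain-rule factor \(\tfrac{2\sqrt{v_1v_2}}{\sqrt{4v_1v_2-\breve{\sigma}(u_1-u_2)^2}}\) leads to \(\tfrac{\delta v}{\delta u}=\tfrac{2v}{u}-\tfrac{\sqrt{4v-\breve{\sigma}u^2}}{u}\), which is precisely what your verification identity requires and what matches the theorem, but it differs by the sign of \(\breve{\sigma}\) from the equation literally displayed in the text (whose labelling appears flipped relative to the definition of \(\sin_{\breve{\sigma}}^{-1}\)), and for \(\breve{\sigma}=0\) the outer derivative is \(2\) rather than \(1\) --- harmless, since a constant factor cancels in the ratio --- so you should not claim to reproduce the displayed ODE verbatim.
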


\section{Geometric Invariants}
\label{sec:geometric-invariants}

As we discussed earlier the invariant metric alone within Riemannian
framework produces only the trivial distance function.  In this
section we work out the distance from the metric and geodesics.
Schematically:
\begin{equation}
  \text{invariant metric \(+\) invariant geodesics} \quad  \xrightarrow{integration} \quad
  \text{distance function}.
\end{equation}
Again looking back at~\eqref{eq:metr-geod-dist} we can see that those
two approaches are similar.

Geodesics should form an invariant subset of an invariant class of
curves with no more than one curve joining every two points. Here this
class is cycles as they are the most and almost the only key objects
in EPH, shown in~\cite{Kisil05a}. Such a subset may be characterised
by an invariant algebraic condition, which in analogy with elliptic
and hyperbolic geometries is taken to be focal orthogonality
(f-orthogonality) to the real axes defined in
\cite{Kisil06a}*{\S~4.3}. In brief a cycle is f-orthogonality to the
real axes if the real axes inverted in a cycle is orthogonal (in the
usual sense) to the real axes.  Explicitly parabola
\(ku^2-2lu-2nv+m=0\) is f-orthogonal to the real axes if:
\begin{equation}
  l^2+\breve{\sigma}n^2-mk=0,
\end{equation}
where \(\breve{\sigma}=-1\), \(0\), \(1\) similarly to (but
independently of!) \(\sigma\). 

As a starting point consider the cycles that pass though \(i\). It is
enough to specify only one such f-orthogonal cycle; the rest will be
obtained by M\"obius transformations fixing \(i\) i.e parabolic
rotations \eqref{eq:rotation}.  Within those constraints there are
three different families of parabolas. They are obtained by parabolic
rotation of principal parabolas:
\begin{equation}
\label{eq:principle}
  y=\frac{\breve{\sigma}}{4}x^{2}+1,
\end{equation}
where
\(\breve{\sigma}=-1,0,1\). Explicitly they are given by equations:
\(\breve{\sigma} +4t^2x^2-8tx+4=4y\). Note  that
those are exactly the same geodesics as in the previous section. Hence
we know what the distance function has to be. But we still give a
sketch of how to work out the distance below. This calculation does
not involve anything from the previous section and is in a way more
elementary and intuitive.

\begin{rem}
  The length taken along an arbitrary parabola \(ax^2+bx+c=y\)
  (parametrised as \(t+i(at^2+bt+c)\)) using the standard definition of
  length is:
\begin{equation}
  \length(\Gamma)=\int_\Gamma \frac{dt}{at^2+bt+c}.
\end{equation}
Depending on whether the discriminant of the denominator is positive,
zero or negative the results are trigonometric, rationals or hyperbolic
functions respectively. This gives an inside to why there are only
three distinct types of geodesics.
\end{rem}
Consider specifically the f-orthogonal parabolas described above and
use a trick of moving one point to \(i\) and the second on to the
principle parabola as in  \eqref{eq:principle}~\cite{Wilson08a}*{\S~7}. Then
the distance along the principle parabola is:
\begin{equation}
  \int\limits_0^x\frac{dt}{\frac{1}{4}\breve{ \sigma}t^{2}+1} =\left\{ \begin{array}{ll}
  4\log \frac{2+x}{2-x},  & \textrm{if \(\breve{\sigma}=-1\)};\\ 
      x,  & \textrm{if \(\breve{\sigma}=0\)};\\
        \tan^{-1} \frac{x}{2},  & \textrm{if \(\breve{\sigma}=1\)}.
    \end{array} \right.
\end{equation}
Given an arbitrary point \( (u, v)\) the distance to \(i\) can be
calculated by finding where the orbit of \((u, v)\) intersects the
principle parabola, which is a matter of solving simultaneous
equations:
\begin{equation}
  v= \frac{\breve{\sigma}}{4}u^{2}+1, \qquad
  v=ku,
\end{equation}
where \(k=\frac{v_0}{u_0}\) giving:
\begin{equation}
  u=\frac{u_0}{ \sqrt {v_0-\frac{\breve{\sigma}u_0^2}{4}}}.
\end{equation}
Finally the  distance from \((u_1,v_1)\) to \((u_2,v_2)\) is calculated by
applying the M\"obius transformation \(r^{-1}(w)\)
(cf.~\eqref{eq:s-map}) which
sends \(w_1\) \(\mapsto\) \(i\) and \(w_2\) \(\mapsto\)
\( (\frac{u_2-u_1}{v_1},\frac{v_2}{v_1})\). So:
\begin{equation}
  d(w_1,w_2) = \left\{ \begin{array}{ll}
      8 \sinh^{-1} \frac{\left |u_2-u_1\right |}{2 \sqrt {v_1 v_2}}, & \textrm{if \(\breve{\sigma}=-1.\)};\\[5pt]
      \frac{\left |u_2-u_1\right |}{\sqrt {v_1 v_2}}, & \textrm{if \(\breve{\sigma}=0\)};\\[5pt]
 \sin^{-1}{ \frac{\left |u_2-u_1\right |}{2 \sqrt{v_1 v_2}}},   & \textrm{if \(\breve{\sigma}=1\)};\\
 \end{array} \right.
\end{equation}
Although the answer above is of no surprise, in light of previous
section, it is nevertheless gives more inside how \(\frac{\left|
    u_2-u_1\right |_\sigma}{2 \sqrt {v_1v_2}}\) and \( 
\sin_{\breve{\sigma}}^{-1}\) appears. Diffusion of the parabolic
geometry into three different sub-cases is known and got the names
\emph{\(P_e\), \(P_p\) and \(P_h\)} to stand for elliptic, parabolic
and hyperbolic flavours, for further examples see~\cite{Kisil07a}.
The geodesics have been drawn in Figure~\ref{fig:geodesic} and it is
striking how there seems to be a ``continuous'' transformation between
the geometries. We can see the transitions from the elliptic case to
\(P_e\) then to \(P_p\) to \(P_h\) to the hyperbolic space-like and
finally to light-like.

There is one more pleasant parallel between all the geometries. In the
Lobachevsky and Minkowski geometries the \emph{centre} of geodesics
lies on the real axes. In the parabolic geometry the respective
\emph{foci} of geodesic parabolas lie on the real axes. Here \(P_h\)
focus is just the usual focus of the parabola, the \(P_p\) is the
vertex of the parabola and the \(P_e\) is the point of directrix
nearest to the parabola. 
Those foci are closely linked~\cite{Kisil05a}*{\S~4.3} to the
f-orthogonality we used earlier.

\begin{figure}[htbp]
  \centering
  \includegraphics[scale=0.5,angle=0]{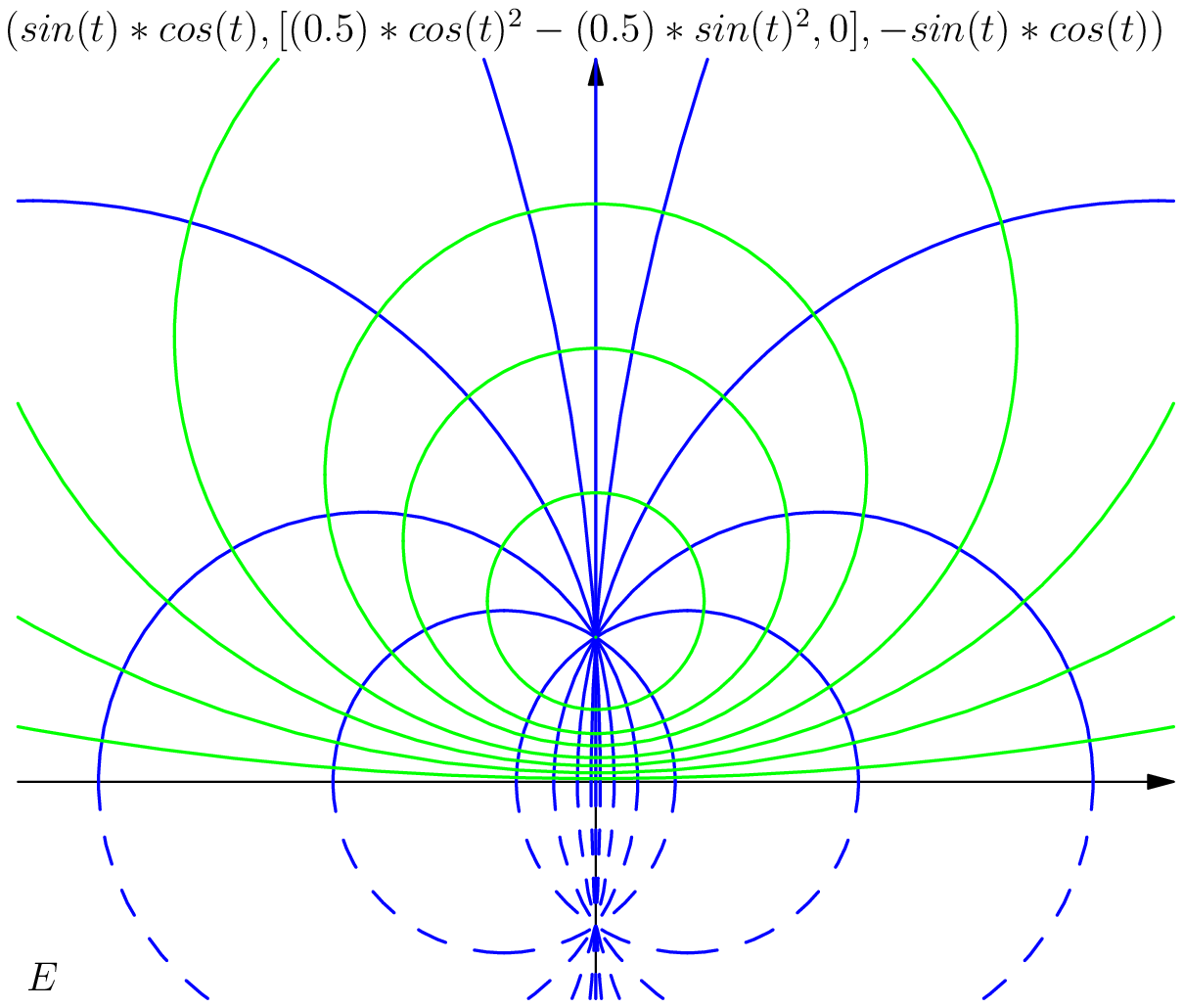}\hfill
 \includegraphics[scale=0.5,angle=0]{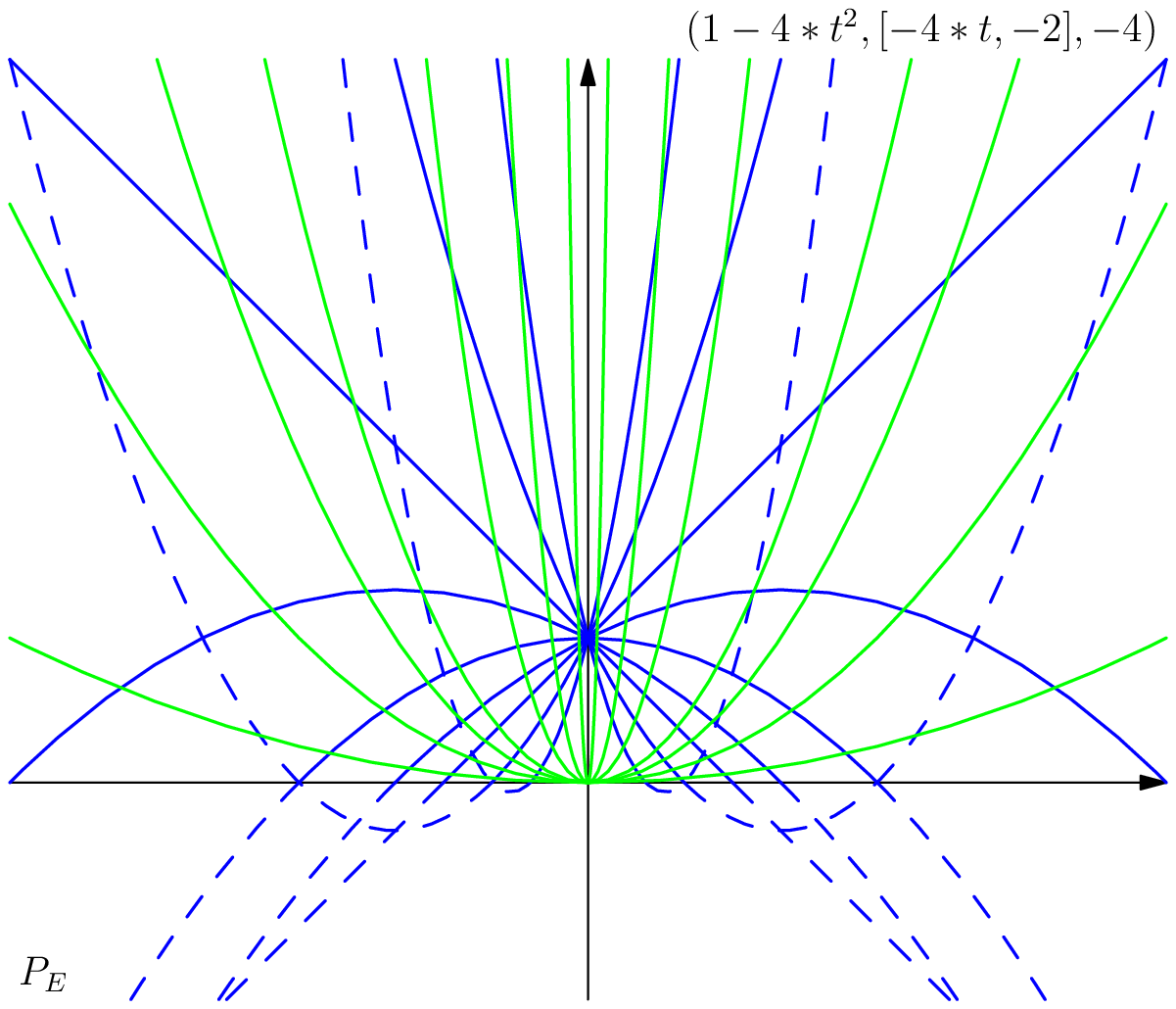}\\[10pt]
 \includegraphics[scale=0.5,angle=0]{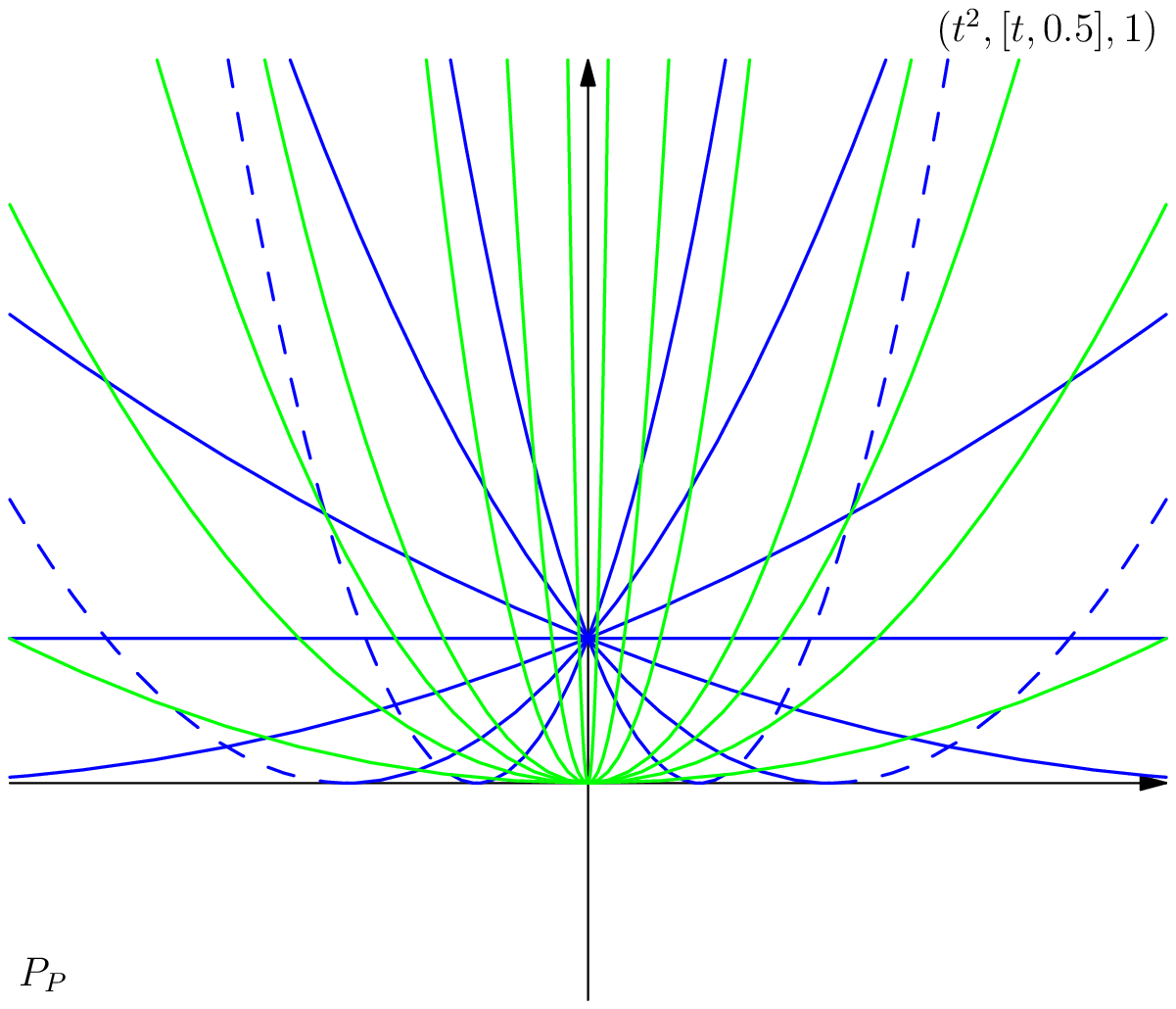}\hfill
  \includegraphics[scale=0.5,angle=0]{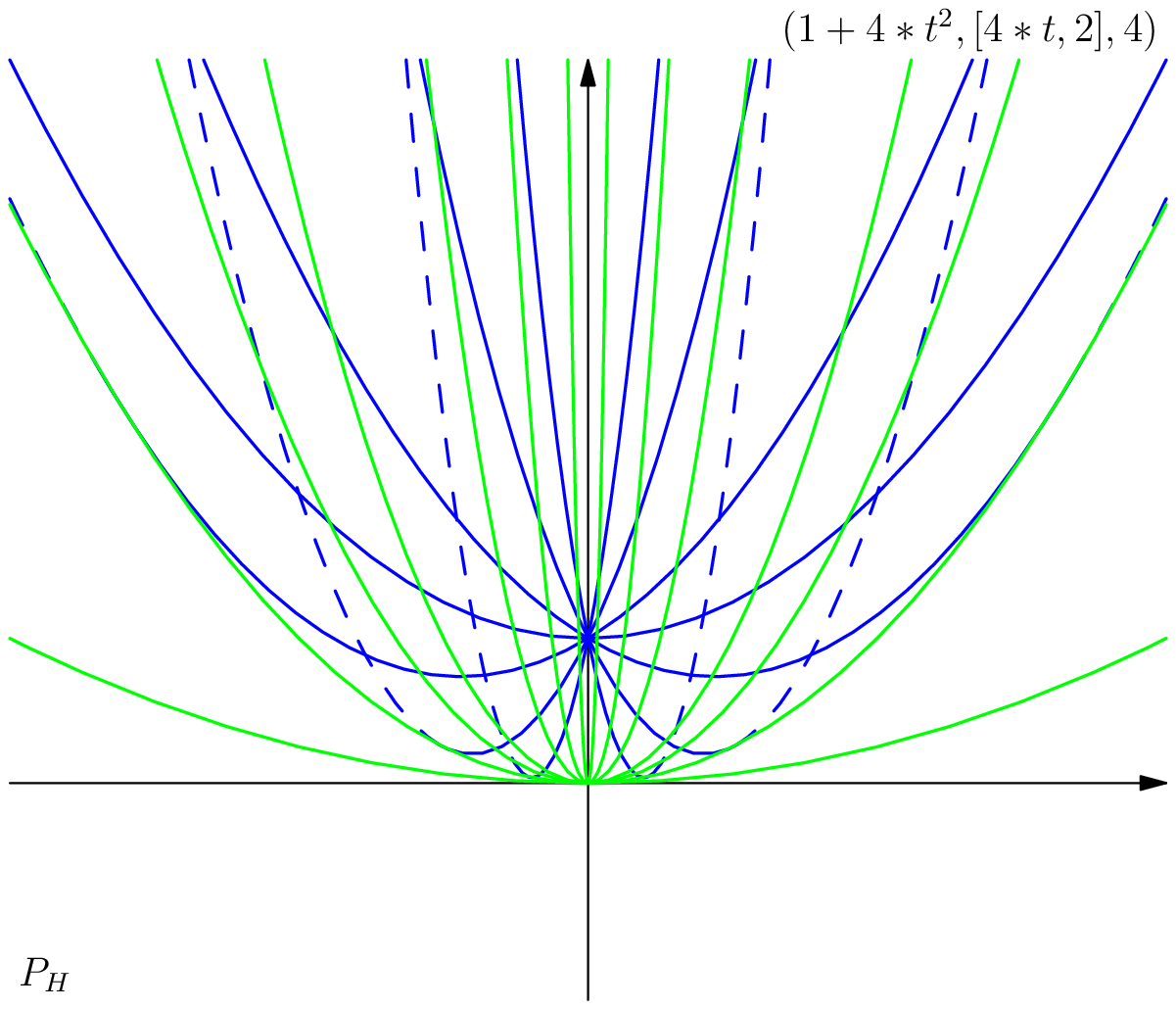}\\[10pt]
 \includegraphics[scale=0.5,angle=0]{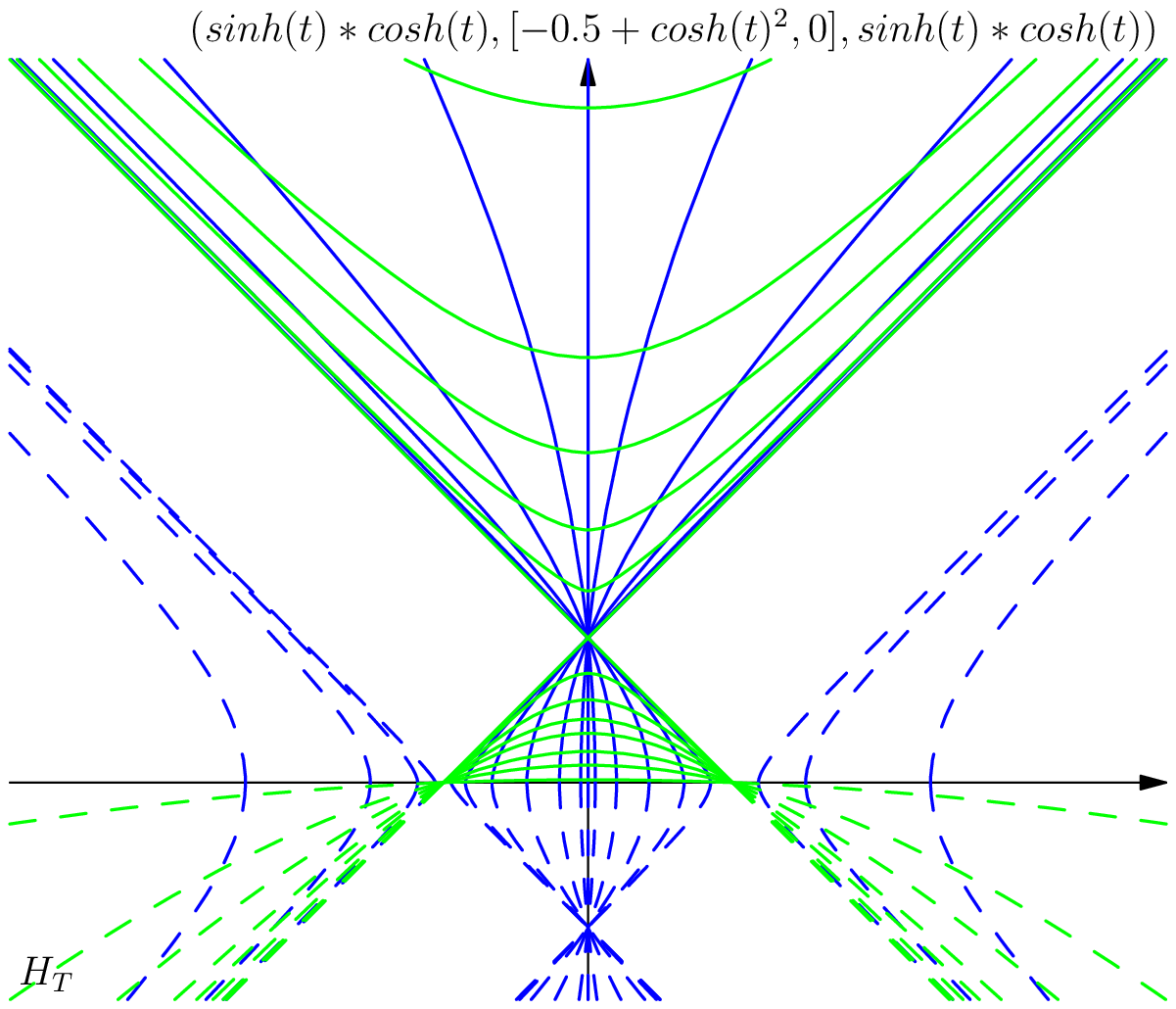}\hfill
 \includegraphics[scale=0.5,angle=0]{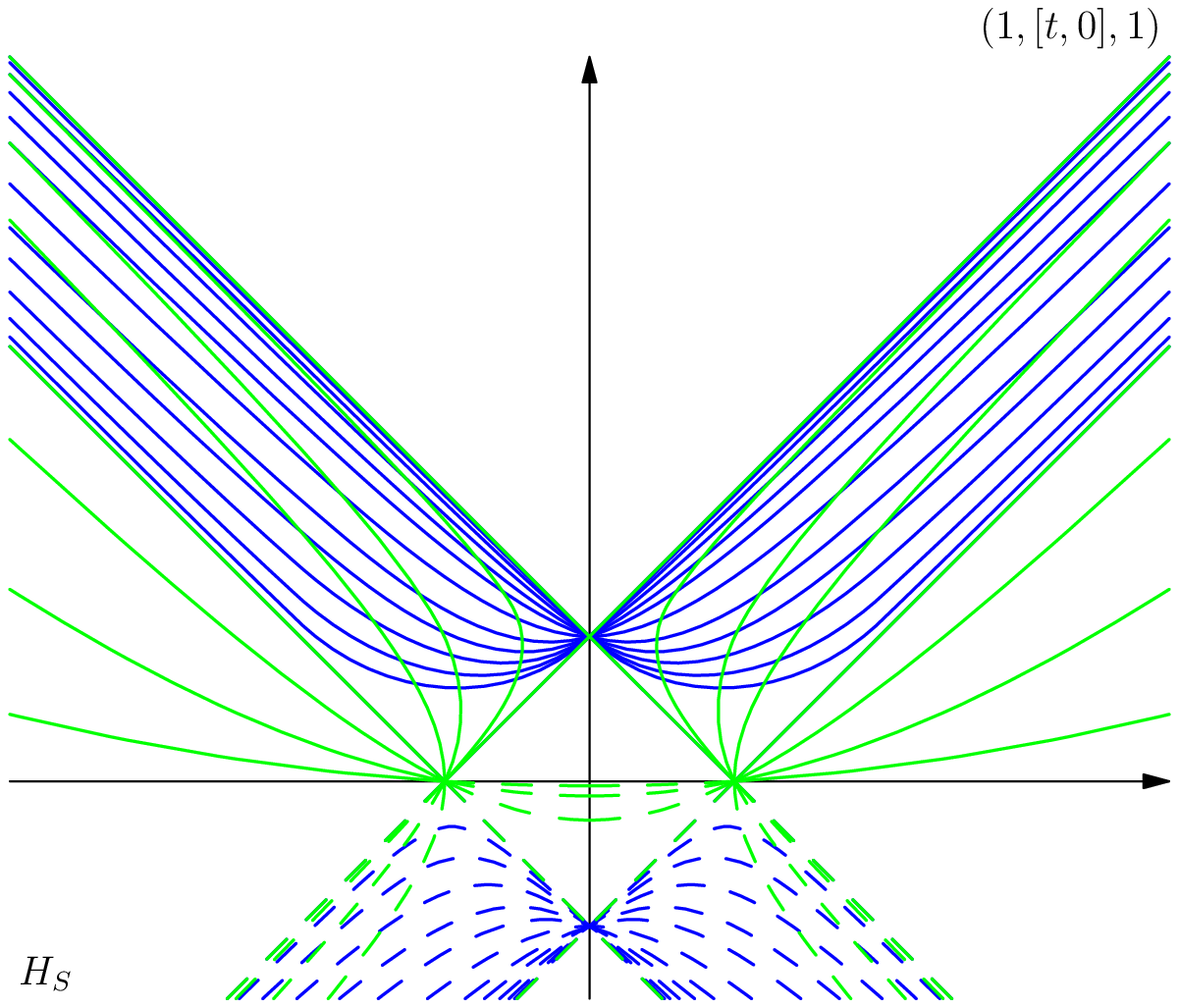}
  \caption{Showing geodesics (blue) and equidistant orbits
    (green) in EPH geometries. Above are written \((k,[l,n],m)\) in 
 \(kx^2-2lx-2nv+m=0\) giving the equation of geodesics.}
  \label{fig:geodesic}
\end{figure}

\begin{rem}
  We can translate all the above results from the upper half plane
  model we have been using  to the unit disc model. This is done via the
  Cayley transform which in parabolic geometries is given by, derived in~\cite{Kisil05a}:
  \begin{equation}
    \label{eq:cayley}
    w \mapsto \frac{2w-i}{i\breve{\sigma}w+2}
  \end{equation}
  Again note that the values of \(\breve{\sigma}=-1,0,1\) correspond
  to \(P_e\), \(P_p\) and \(P_h\) flavours of the parabolic unit
  discs. Then applying the inverse Cayley transform we can see
  that the invariant distance between two points \(u_1+iv_1\) and \(u_2+iv_2\)
  in the disc is:
  \begin{equation}
    \label{eq:disk-dist}
    \sin_{\breve{\sigma}}^{-1}  2\frac{\left |u_2-u_1\right |}{2
      \sqrt{(1+2v_1+\breve{\sigma}u_1^2)(1+2v_2+\breve{\sigma}u_2^2)}}. 
  \end{equation}
  An interesting feature of the Cayley transform~\eqref{eq:cayley} is
  as follows: in all three flavours of the parabolic unit disk the
  real line is a geodesic passing the origin for the
  respective distance~\eqref{eq:disk-dist} with the same value of
  \(\breve{\sigma}\) as in~\eqref{eq:cayley} .  
\end{rem}

\section{Properties of the distance function}
\label{sec:prop-dist-funct}

In the introduction, we listed properties of the standard distance
functions which we are going to re-visit in the context of obtained
invariant functions
\(d(z,w)_{\breve{\sigma}}=\sin_{\breve{\sigma}}^{-1}
\frac{\left|\Re[z-w] \right |}{2\sqrt{\Im[z]\Im[w]}}\).  Two of the
four properties hold: it is clearly symmetric and positive for every
two points. But the distance of any point to a point on the same
vertical line is zero so \(d(z, w)=0\) does not imply \(z=w\).  This
can be overcome by introducing a different distance function just for
the points on the vertical lines as is done in~\cite{Yaglom79}*{\S~3}.
Note that we still have \(d(z,z)=0\) for all \(z\).

Also the triangle inequality does not hold but an interesting variation on
it is true:

\begin{thm}
  Take any \(SL_2(\mathbb{R})\) invariant distance function and take
  two points \(w_1, w_2\) and the geodesic (in the sense of
  section~\ref{sec:additivity}) through points.  Consider the strip
  \(\Re[w_1]<u<\Re[w_2]\) and take a point \(z\) in it. Then the
  geodesic divides the strip into two regions where \(d(w_1,w_2)\le
  d(w_1,z)+d(z,w_2)\) and where \(d(w_1,w_2)\ge d(w_1,z)+d(z,w_2)\).
\end{thm}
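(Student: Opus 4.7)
The plan is to reduce the theorem to a single monotonicity statement along vertical slices of the strip. Fix an abscissa $u_0 \in (\Re[w_1], \Re[w_2])$ and consider
\[
  \phi(v) \;=\; d(w_1, z) + d(z, w_2), \qquad z = u_0 + iv, \quad v > 0.
\]
By Theorem~\ref{th:monotonic} we may write the given invariant distance as $d = h \circ F$, where $F(z,w) = |z-w|_\sigma/\sqrt{\Im[z]\Im[w]}$ and $h$ is continuous and strictly increasing. In the parabolic regime ($\sigma = 0$) the numerator $|z - w_j|_\sigma = |u_0 - \Re[w_j]|$ is independent of $v$, so each argument of $h$ is a positive constant divided by $\sqrt{v \, \Im[w_j]}$; both arguments therefore shrink strictly as $v$ grows, and $\phi$ is strictly decreasing and continuous on $(0,\infty)$.

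By the defining additivity property of geodesics in Section~\ref{sec:additivity}, $\phi(v) = d(w_1,w_2)$ precisely when $u_0 + iv$ lies on the geodesic $\Gamma$ joining $w_1$ to $w_2$. The explicit description of $\Gamma$ as a parabola in Theorem~4.3 shows that for every $u_0$ in the strip there is a unique such height $v_0 = v_0(u_0)$; this realises $\Gamma$ as the graph $v = v_0(u)$ over the strip. Strict monotonicity of $\phi$ then forces
\[
  \phi(v) < d(w_1,w_2) \ \text{for}\ v > v_0(u_0)
  \quad\text{and}\quad
  \phi(v) > d(w_1,w_2) \ \text{for}\ v < v_0(u_0),
\]
which is exactly the reverse and the usual triangle inequality. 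Varying $u_0$ across the strip sweeps out the two half-regions into which $\Gamma$ divides it.

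The substantive step is the monotonicity of $\phi$; everything else is bookkeeping together with the single-point property of $\Gamma \cap \{\Re z = u_0\}$. The main obstacle I anticipate is that the theorem speaks of \emph{any} $SL_2(\mathbb{R})$-invariant distance, so one must first invoke Theorem~\ref{th:monotonic} — with the mild monotonous hypothesis of Definition~\ref{th:monotonious} — to reduce the distance to a strictly increasing function of $F$. Once this reduction is in place, the special parabolic feature that $|z-w|_\sigma$ is insensitive to the imaginary parts does all the work: only the denominator $\sqrt{\Im[z]\Im[w_j]}$ varies along a vertical slice, and it varies monotonically. This is precisely the mechanism that fails in the elliptic and hyperbolic cases (where the numerator itself depends on $v$) and explains why the two-sided phenomenon described by the theorem is particular to parabolic geometry.
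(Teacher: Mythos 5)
Your argument is correct and essentially reproduces the paper's own proof: the paper likewise reduces any invariant distance to \(h\circ F\) via Theorem~\ref{th:monotonic}, uses additivity at the geodesic point \(w(a)\) sharing the real part of \(z\), and exploits that in the parabolic case the numerator \(\left|\Re[z-w]\right|\) is independent of the imaginary part, so raising \(\Im[z]\) decreases both distance terms. Your function \(\phi(v)\) on vertical slices is just a repackaging of that same comparison, so no new ideas or gaps are involved.
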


\begin{rem}
  This is a kind of intermediate theorem between the elliptic case where
  \(d(w_1,w_2)\le d(w_1,z)+d(z,w_2)\) for all \( w_1\), \(w_2\), \(z \in \mathbb{C}\)
  and the hyperbolic geometry where the converse is true
  \(d(w_1,w_2)\ge d(w_1,z)+d(z,w_2)\). 
\end{rem}

\begin{proof}
  The only possible invariant distance function in parabolic geometry
  is of the form \(d(z, w)=h \circ \frac{\left |\Re[z-w] \right
    |}{2\sqrt{\Im[z]\Im[w]}}\) where \(h\) is a monotonically
  increasing continuous real function by Thm.~\ref{th:monotonic}. Fix
  two points \(w_1\), \(w_2\) and the geodesic though them. Now
  consider some point \(z=a+ib\) in the strip. The distance function
  is additive along a geodesic so \(d(w_1, w_2)= d(w_1,
  w(a))+d(w(a),w_2)\) where \(w(a)\) is a point on the geodesic with
  real part equal to \(a\). But if \(\Im[w(a)]<b\) then \(d(w_1,
  w(a))>d(w_1, z)\) and \(d(w(a), w_2)>d(z, w_2)\) which implies
  \(d(w_1,w_2)> d(w_1,z)+d(z,w_2)\).  Similarly if \(\Im[w(a)]>b\)
  then \(d(w_1,w_2)< d(w_1,z)+d(z,w_2)\).
\end{proof}
\begin{rem}
  The reason for the ease with which the result falls out is the fact
  that the distance function is additive along the geodesics. This
  justifies the definition of geodesic in terms of additivity. 
\end{rem}


\begin{figure}[htbp]
  \centering
 \includegraphics[scale=0.6,angle=0]{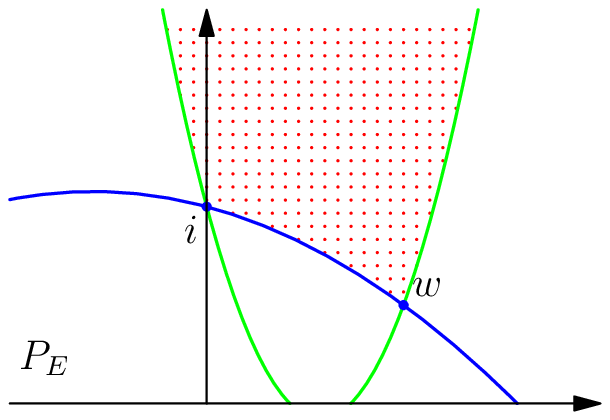}\hfill
 \includegraphics[scale=0.6,angle=0]{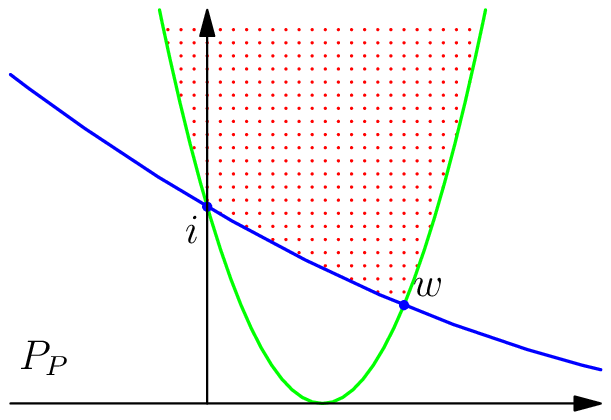}\hfill
\includegraphics[scale=0.6,angle=0]{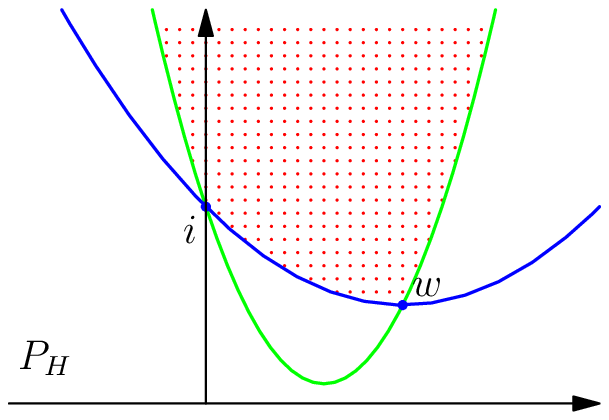}
  \caption{Showing the area where the triangular inequality
    fails (marked red).}
  \label{fig:triangle}
\end{figure}
To illustrate those ideas look at the region where the converse of the
triangular inequality holds for
\(d(z,w)_{\breve{\sigma}}=\sin_{\breve{\sigma}}^{-1}
\frac{\left|\Re[z-w] \right |}{2\sqrt{\Im[z]\Im[w]}}\) marked red on Figure
\ref{fig:triangle}. It is enclosed by two parabolas both of the form
\((\breve{\sigma}+4t^2)u^2-8tu-4v+4=0\) (which is the general equation
of geodesics) and both go though the two fixed point. They arise from
taking \(\pm\) when solving the quadratic equation to find \(t\). Both
of them separate the region where the triangle inequality fails but one
of them in between two points and the second outside.

\section{Conclusion}
\label{sec:conclusion}
In summary, in this paper, we managed to find non-degenerate
``straight lines'' in parabolic geometries, showing that parabolic
geometry possesses many non-trivial features. Also some of its properties have
been discovered and this revels its importance as ``the missing link''
between well known geometries.  We opened new options for further study
since now it is possible to create objects like triangles. This gives
opportunities to discover  the corresponding parabolic theorems
to famous ones like Pythagoras.
In other words finding the ``lines'' gives a solid footing for further
investigation into this exciting subject. Parabolic geometry is a
promising area because of the interest, refreshingly different from all other
geometries distance functions. 

The approach in this paper applies not only to \(SL_2(\mathbb{R})\)
but can be used on other semi-simple Lie groups \(G\). By considering
their action on homogeneous spaces \(G/H\) where \(H\) its subgroup,
it is possible  to create higher dimensional geometries.  

\section*{Acknowledgements}
\label{sec:aknowledgements}
I am grateful to Dr.~V.V.~Kisil for the suggestion of this topic and
useful discussions during my work on this paper. I wish to thank Cong
Chen and Richard Webb for many helpful comments. Anonymous referees
provided numerous suggestions improving the paper.

\small
\newcommand{\noopsort}[1]{} \newcommand{\printfirst}[2]{#1}
  \newcommand{\singleletter}[1]{#1} \newcommand{\switchargs}[2]{#2#1}
  \newcommand{\irm}{\textup{I}} \newcommand{\iirm}{\textup{II}}
  \newcommand{\vrm}{\textup{V}} \providecommand{\cprime}{'}
  \providecommand{\eprint}[2]{\texttt{#2}}
  \providecommand{\myeprint}[2]{\texttt{#2}}
  \providecommand{\arXiv}[1]{\myeprint{http://arXiv.org/abs/#1}{arXiv:#1}}
  \providecommand{\CPP}{\texttt{C++}} \providecommand{\NoWEB}{\texttt{noweb}}
  \providecommand{\MetaPost}{\texttt{Meta}\-\texttt{Post}}
  \providecommand{\GiNaC}{\textsf{GiNaC}}
  \providecommand{\pyGiNaC}{\textsf{pyGiNaC}}
  \providecommand{\Asymptote}{\texttt{Asymptote}}
\begin{bibdiv}
\begin{biblist}

\bib{Arnold91}{book}{
      author={Arnol{\cprime}d, V.~I.},
       title={Mathematical methods of classical mechanics},
      series={Graduate Texts in Mathematics},
   publisher={Springer-Verlag},
     address={New York},
        date={1991},
      volume={60},
        ISBN={0-387-96890-3},
        note={Translated from the 1974 Russian original by K. Vogtmann and A.
  Weinstein, Corrected reprint of the second (1989) edition},
      review={\MR{96c:70001}},
}

\bib{Beardon05a}{book}{
      author={Beardon, Alan~F.},
       title={Algebra and geometry},
   publisher={Cambridge University Press},
     address={Cambridge},
        date={2005},
        ISBN={0-521-89049-7},
      review={\MR{MR2153234 (2006a:00001)}},
}

\bib{BekkaraFrancesZeghib06}{article}{
      author={Bekkara, Esmaa},
      author={Frances, Charles},
      author={Zeghib, Abdelghani},
       title={On lightlike geometry: isometric actions, and rigidity aspects},
        date={2006},
        ISSN={1631-073X},
     journal={C. R. Math. Acad. Sci. Paris},
      volume={343},
      number={5},
       pages={317\ndash 321},
      review={\MR{MR2253050 (2007e:53045)}},
}

\bib{BocCatoniCannataNichZamp07}{book}{
      author={Boccaletti, Dino},
      author={Catoni, Francesco},
      author={Cannata, Roberto},
      author={Catoni, Vincenzo},
      author={Nichelatti, Enrico},
      author={Zampetti, Paolo},
       title={The mathematics of {Minkowski} space-time and an introduction to
  commutative hypercomplex numbers},
   publisher={Springer Verlag},
        date={2007},
}

\bib{Carne06}{misc}{
      author={Carne, T.~K.},
       title={Geometry and groups},
         how={Lecture notes, Cambridge University},
        date={2006},
}

\bib{Chern96a}{article}{
      author={Chern, Shiing-Shen},
       title={Finsler geometry is just {R}iemannian geometry without the
  quadratic restriction},
        date={1996},
        ISSN={0002-9920},
     journal={Notices Amer. Math. Soc.},
      volume={43},
      number={9},
       pages={959\ndash 963},
      review={\MR{MR1400859 (97e:53129)}},
}

\bib{HerranzOrtegaSantander99a}{article}{
      author={Herranz, Francisco~J.},
      author={Ortega, Ram{\'o}n},
      author={Santander, Mariano},
       title={Trigonometry of spacetimes: a new self-dual approach to a
  curvature/signature (in)dependent trigonometry},
        date={2000},
        ISSN={0305-4470},
     journal={J. Phys. A},
      volume={33},
      number={24},
       pages={4525\ndash 4551},
        note={\arXiv{math-ph/9910041}},
      review={\MR{MR1768742 (2001k:53099)}},
}

\bib{Kisil05a}{article}{
      author={Kisil, Vladimir~V.},
       title={Erlangen program at large--1: Geometry of invariants},
        date={2005},
        note={\arXiv{math.CV/0512416}, Preprint LEEDS--MATH--PURE--2005--28},
}

\bib{Kisil07a}{article}{
      author={Kisil, Vladimir~V.},
       title={Erlangen program at large---2: {Inventing} a wheel. {The}
  parabolic one},
        date={2007},
        note={\arXiv{0707.4024}. Preprint LEEDS--MATH--PURE--2007-07},
}

\bib{Kisil06a}{article}{
      author={Kisil, Vladimir~V.},
       title={Erlangen program at large--0: Starting with the group {${\rm
  SL}\sb 2({\bf R})$}},
        date={2007},
        ISSN={0002-9920},
     journal={Notices Amer. Math. Soc.},
      volume={54},
      number={11},
       pages={1458\ndash 1465},
        note={\arXiv{math/0607387},
  \href{http://www.ams.org/notices/200711/tx071101458p.pdf}{On-line}},
      review={\MR{MR2361159}},
}

\bib{Kisil04b}{inproceedings}{
      author={Kisil, Vladimir~V.},
      author={Biswas, Debapriya},
       title={Elliptic, parabolic and hyperbolic analytic function theory--0:
  Geometry of domains},
        date={2004},
   booktitle={Complex analysis and free boundary flows},
      series={Trans. Inst. Math. of the NAS of Ukraine},
      volume={1},
       pages={100\ndash 118},
        note={\arXiv{math.CV/0410399}},
}

\bib{KuruczWolterZakharyaschev05}{inproceedings}{
      author={Kurucz, A.},
      author={Wolter, F.},
      author={Zakharyaschev, Michael},
       title={Modal logics for metric spaces: Open problems},
        date={2005},
   booktitle={We will show them: Essays in honour of {Dov Gabbay}},
      editor={Artemov, S.},
      editor={Barringer, H.},
      editor={d'Avila Garcez, A.~S.},
      editor={Lamb, L.~C.},
      editor={Woods, J.},
      volume={2},
   publisher={College Publications},
       pages={\href{http://www.dcs.bbk.ac.uk/~michael/dov.pdf}{193\ndash 208}},
}

\bib{Lang85}{book}{
      author={Lang, Serge},
       title={{${\rm SL}\sb 2({\bf R})$}},
      series={Graduate Texts in Mathematics},
   publisher={Springer-Verlag},
     address={New York},
        date={1985},
      volume={105},
        ISBN={0-387-96198-4},
        note={Reprint of the 1975 edition},
      review={\MR{803508 (86j:22018)}},
}

\bib{MikesBascoBerezovski08a}{article}{
      author={Mike{\v{s}}, Josef},
      author={B{\'a}cs{\'o}, S{\'a}ndor},
      author={Berezovski, Vladimir},
       title={Geodesic mappings of weakly {B}erwald spaces and {B}erwald spaces
  onto {R}iemannian spaces},
        date={2008},
        ISSN={1311-8080},
     journal={Int. J. Pure Appl. Math.},
      volume={45},
      number={3},
       pages={413\ndash 418},
      review={\MR{MR2418026}},
}

\bib{Wilson08a}{book}{
      author={Wilson, P. M.~H.},
       title={Curved spaces},
   publisher={Cambridge University Press},
     address={Cambridge},
        date={2008},
        ISBN={978-0-521-71390-0},
        note={From classical geometries to elementary differential geometry},
      review={\MR{MR2376701}},
}

\bib{Yaglom79}{book}{
      author={Yaglom, I.~M.},
       title={A simple non-{E}uclidean geometry and its physical basis},
   publisher={Springer-Verlag},
     address={New York},
        date={1979},
        ISBN={0-387-90332-1},
        note={An elementary account of Galilean geometry and the Galilean
  principle of relativity, Heidelberg Science Library, Translated from the
  Russian by Abe Shenitzer, With the editorial assistance of Basil Gordon},
      review={\MR{MR520230 (80c:51007)}},
}

\end{biblist}
\end{bibdiv}

\end{document}